\numberwithin{equation}{section}
\newtheorem{theorem}{Theorem}[section]
\newtheorem{proposition}[theorem]{Proposition}
\newtheorem{lemma}[theorem]{Lemma}
\newtheorem{corollary}[theorem]{Corollary}
\newtheorem{problem}[theorem]{Problem}
\theoremstyle{definition}
\newtheorem{definition}[theorem]{Definition}
\newtheorem{example}[theorem]{Example}
\theoremstyle{remark}
\newtheorem{remark}[theorem]{Remark}
\newcommand{\Z}{\mathbb{Z}}
\newcommand{\Q}{\mathbb{Q}}
\newcommand{\C}{\mathbb{C}}
\newcommand{\proj}{{\mathbb P}}
\begin{document}

\title[]{On $K3$ surfaces which dominate Kummer surfaces}
\author[]{Shouhei Ma}
\address{Graduate~School~of~Mathematical~Sciences, the~University~of~Tokyo, 3-8-1~Komaba, Meguro-ku, Tokyo 153-8914, Japan}
\email{sma@ms.u-tokyo.ac.jp}
\subjclass[2000]{Primary 14J28, Secondary 14E05}
\keywords{K3 surface, rational map, Shioda-Inose structure. } 
\maketitle

\begin{abstract} 
We study isogeny relations between $K3$ surfaces and Kummer surfaces. 
Specifically, we prove a Torelli-type theorem for the existence of rational maps from $K3$ surfaces to Kummer surfaces, 
and a Kummer sandwich theorem for $K3$ surfaces with Shioda-Inose structure. 
\end{abstract}

\section{Introduction}\label{sec1} 
In the present note we study rational maps between $K3$ surfaces in terms of their periods.   
Let $X$ be a complex algebraic $K3$ surface  
and $T_{X}$  be the transcendental lattice of $X$,  which is endowed with a natural Hodge structure.   
For a natural number $n>0$   
let $T_{X}(n)$ be the lattice obtained by multiplying the quadratic form on $T_{X}$ by $n$.   
In \cite{Ni2} Shafarevich posed the following question. 
   
\begin{problem}[\cite{Ni2} Question 1.1]\label{Shafarevich}
Let $X$ and $Y$ be complex algebraic $K3$ surfaces.  
Is it true that 
there exists a dominant rational map $X \dashrightarrow Y$ if and only if 
there exists a Hodge isometry $T_{X}\otimes{\Q}\simeq T_{Y}(n)\otimes{\Q}$ for some natural number $n$?
\end{problem}

Shafarevich's question is a variation of Torelli-type problem.  
It proposes to consider the ${\Q}$-Hodge structures $T_X\otimes{\Q}$ (up to scaling) for the existence of rational maps. 
A recent result of Chen \cite{Ch} shows that the answer is in general negative. 
On the other hand, the problem has been solved affirmatively in certain cases: 
for $K3$ surfaces $X$ with Picard number $\rho(X)=20$ (``singular $K3$ surfaces'') 
by Inose and Shioda \cite{S-I}, \cite{In} already before \cite{Ni2}; 
for $K3$ surfaces $X$ with $\rho(X)=19$ by Nikulin-Shafarevich \cite{Ni2}; 
Nikulin \cite{Ni2} studied rational maps obtained as compositions of double coverings.  
When both $X$ and $Y$ are Kummer surfaces, Problem \ref{Shafarevich} is obviously true 
by the corresponding property of Abelian surfaces. 
The first purpose of this note is to answer Problem \ref{Shafarevich} affirmatively 
when the target $Y$ is a Kummer surface. 
 
\begin{theorem}\label{main}
Let $X$ and $Y$ be complex algebraic $K3$ surfaces.   
Assume that $Y$ is dominated by some Kummer surface, 
e.g., $Y$ admits a Shioda-Inose structure or $Y$ itself is a Kummer surface.   
Then there exists a dominant rational map $X \dashrightarrow Y$    
if and only if  there exists a Hodge isometry $T_{X}\otimes {\Q} \simeq T_{Y}(n)\otimes{\Q}$ 
for some natural number $n$.   
\end{theorem}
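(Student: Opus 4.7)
The ``only if'' direction proceeds via the standard pullback argument. A dominant rational map $f\colon X \dashrightarrow Y$ between $K3$ surfaces is generically finite, and after resolving its indeterminacy one obtains an injection $f^{\ast}\colon H^{2}(Y,\Q) \hookrightarrow H^{2}(X,\Q)$ of $\Q$-Hodge structures satisfying $f_{\ast}f^{\ast} = (\deg f)\cdot \mathrm{id}$, so that $f^{\ast}$ multiplies the intersection form by $\deg f$. Its restriction to $T_{Y}\otimes \Q$ lands in $T_{X}\otimes \Q$ and is nonzero on the line $H^{2,0}(Y)$; since $T_{X}\otimes \Q$ is a simple $\Q$-Hodge structure (because $X$ is an algebraic $K3$), this restriction is an isomorphism, yielding $T_{X}\otimes \Q \simeq T_{Y}(\deg f)\otimes \Q$.

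For the ``if'' direction, let $\pi\colon \mathrm{Km}(A) \dashrightarrow Y$ be the dominant rational map from a Kummer surface supplied by hypothesis. Applying the ``only if'' direction to $\pi$ gives a Hodge isometry $T_{\mathrm{Km}(A)}\otimes \Q \simeq T_{Y}(\deg \pi)\otimes \Q$, which combined with the given $T_{X}\otimes \Q \simeq T_{Y}(n)\otimes \Q$ and with the standard identification $T_{\mathrm{Km}(A)}\simeq T_{A}(2)$ yields a Hodge isometry $T_{X}\otimes \Q \simeq T_{A}(k)\otimes \Q$ for some positive rational $k$. The plan is to produce a dominant rational map $X \dashrightarrow \mathrm{Km}(B)$ for a suitable abelian surface $B$ isogenous to $A$; composing with the rational map $\mathrm{Km}(B) \dashrightarrow \mathrm{Km}(A)$ descended from an isogeny $B\to A$, and then with $\pi$, will furnish the desired dominant map $X \dashrightarrow Y$.

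The core step is thus to construct such a $B$ together with $X \dashrightarrow \mathrm{Km}(B)$. Since abelian surfaces in a fixed isogeny class correspond bijectively to $\Z$-lattices inside $H^{1}(A,\Q)$, and any such lattice determines the integral transcendental lattice of the resulting surface, I would exploit this freedom to upgrade the rational Hodge isometry $T_{X}\otimes \Q \simeq T_{A}(k)\otimes \Q$ to an \emph{integral} Hodge isometry $T_{X}\simeq T_{B}$ for some $B$ in the isogeny class of $A$. Once such a $B$ is obtained, Morrison's criterion shows that $X$ carries a Shioda-Inose structure with associated abelian surface $B$, and the Kummer sandwich theorem (the other main result of the paper) then supplies the required dominant degree-$2$ rational map $X \dashrightarrow \mathrm{Km}(B)$. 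The main obstacle is precisely this integral adjustment: producing a rational isometry is straightforward, but matching the $\Z$-structures requires a delicate analysis of how isogenies of abelian surfaces act on the transcendental lattices and their discriminant forms, in order to realize the prescribed integral shape of $T_{X}$ inside the isogeny class of $A$.
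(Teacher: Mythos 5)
Your ``only if'' direction is fine and is essentially the paper's argument. The gap is in the core step of the ``if'' direction: the claim that the rational Hodge isometry $T_{X}\otimes\Q\simeq T_{A}(k)\otimes\Q$ can be upgraded, by varying the lattice $H_{1}(B,\Z)\subset H_{1}(A,\Q)$ within the isogeny class, to an \emph{integral} Hodge isometry $T_{X}\simeq T_{B}$. This is not merely a ``delicate analysis'' you have postponed; it is false in general. For any abelian surface $B$ the lattice $T_{B}$ sits primitively inside $H^{2}(B,\Z)\simeq U^{3}$, which imposes strong integral constraints: for instance the discriminant group of $T_{B}$ has length at most $6-{\rm rk}(T_{B})$, and $T_{B}$ in particular admits a primitive embedding into $U^{3}$. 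An arbitrary $X$ satisfying only the rational hypothesis need satisfy none of this. Concretely, take $T_{A}\simeq U\oplus\langle 2\rangle\oplus\langle -2\rangle$ and (by surjectivity of the period map) a $K3$ surface $X$ with $T_{X}$ Hodge isometric to $T_{A}(4)$, whose discriminant group has length $4$; then $T_{X}$ is isometric to no transcendental lattice of an abelian surface, $X$ admits no Shioda--Inose structure with associated surface isogenous to $A$, and your construction produces no map at all. (This is essentially the situation of the paper's Example 2.7.) So your argument proves the theorem only for those $X$ which themselves carry a Shioda--Inose structure, not in the stated generality; degree-$2$ covers plus the sandwich theorem cannot bridge this integral gap.

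The paper's route avoids exactly this point. It only needs an embedding $T_{X}\hookrightarrow U^{3}$ that is \emph{not required to be primitive}, and this is obtained lattice-theoretically from the rational condition (Proposition 3.1, (ii)$\Rightarrow$(iii), via maximal even overlattices containing isotropic vectors). One then takes the primitive closure $T$ of $T_{X}$ in $U^{3}$, uses the surjectivity of the period map to produce an auxiliary $K3$ surface $Y'$ with $T_{Y'}$ Hodge isometric to $T$, and -- this is the ingredient your proposal lacks, announced as ingredient (iii) in the introduction -- invokes [Ma, Proposition 7] (rational maps coming from relative multiplication maps on elliptic fibrations, available since $U\hookrightarrow NS_{X}$ because ${\rm rk}(NS_{X})\geq 13$) to get a dominant rational map $X\dashrightarrow Y'$ of degree possibly larger than $2$. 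It is this auxiliary $Y'$, not $X$, that admits a Shioda--Inose structure by Morrison's criterion; the chain $X\dashrightarrow Y'\dashrightarrow {\rm Km\/}B\dashrightarrow {\rm Km\/}A\dashrightarrow Y$, with $B$ isogenous to $A$, then concludes as in your outer composition.
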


In order to produce a desired map $X\dashrightarrow Y$, 
we will compose the following three types of rational maps: 
$(1)$ double coverings,   
$(2)$ rational maps between Kummer surfaces induced by isogenies of Abelian surfaces, and   
$(3)$ multiplication maps from elliptic $K3$ surfaces to the associated Jacobian fibrations. 
The first two have been also used in \cite{S-I}, \cite{In}, and \cite{Ni2}.  
A new ingredient of this paper is a systematic use of the third type of rational maps. 
In the course of the proof, 
we shall characterize those $K3$ surfaces which dominate Kummer surfaces by their Hodge structures.   

The approach of Inose and Shioda for Problem \ref{Shafarevich}  
was to use a Kummer sandwich theorem, 
which roughly says that a singular $K3$ surface is two-isogenous to a Kummer surface. 
Recently the Kummer sandwich theorem has been extended to a larger class of $K3$ surfaces by Shioda \cite{Sh} and has found some arithmetic applications.   
The $K3$ surfaces studied in \cite{Sh} are characterized by 
the existence of Shioda-Inose correspondences with products of elliptic curves.   
The second purpose of this note is to prove a Kummer sandwich theorem 
for all complex algebraic $K3$ surfaces with Shioda-Inose structure (Theorem \ref{sandwich for Shioda-Inose}). 
It is independent of Theorem \ref{main}, 
and shows a more precise isogeny relation between 
Kummer surfaces and $K3$ surfaces with Shioda-Inose structure.  

Throughout this paper, the varieties are assumed to be complex algebraic. 
The transcendental lattice of an algebraic surface $X$ will be denoted by $T_X$.  
By $U$ we denote the rank $2$ even indefinite unimodular lattice.  
By $E_8$ we denote the rank $8$ even {\rm negative-definite\/} unimodular lattice.  
For a lattice $L=(L, (,)_{L})$ and a natural number $n$,  
we denote by $L(n)$ the scaled lattice $(L, n(,)_{L})$.

\noindent\textbf{Acknowledgements.}   
The author wishes to express his gratitude to 
Professor Ken-Ichi Yoshikawa for his advice and encouragement.  
He also thanks A. Mehran and M. Sch\"utt for their useful comments.  
He is grateful to the referee for reading the manuscript carefully and for pointing out the reference \cite{Ch}. 
This work was supported by Grant-in-Aid for JSPS fellows [21-978].

\section{Kummer sandwich theorem}\label{sec2}

Let $X$ be an algebraic $K3$ surface. 
Recall that a {\it Nikulin involution\/} of $X$ is 
an involution $\iota : X \to X$ which acts trivially on $H^{2, 0}(X)$.  
A Nikulin involution of $X$ canonically corresponds to a double covering $X \dashrightarrow Y$ 
to another $K3$ surface $Y$.  
Indeed, if we have a double covering $\pi : X \dashrightarrow Y$, 
then the covering transformation of $\pi $ is a Nikulin involution of $X$. 
Conversely, for a Nikulin involution $\iota $ of $X$ 
the minimal resolution $Y = \widetilde{X/\langle \iota \rangle }$ of the quotient surface 
is a $K3$ surface (\cite{Ni1}), 
and we have the rational quotient map $\pi : X \dashrightarrow Y$ of degree $2$.  
The transcendental lattices $T_{X}$ and $T_{Y}$ are related by the chain of inclusions 
\begin{equation}\label{relation of the Hodge structures}
2T_{Y} \subseteq \pi _{\ast}T_{X} = T_{X}(2) \subseteq T_{Y}, 
\end{equation}
which preserves the quadratic forms and the Hodge structures.  

Nikulin \cite{Ni1}, \cite{Ni2} and Morrison \cite{Mo} developed the lattice-theoretic aspect of Nikulin involution.  
Let us denote  
\begin{eqnarray}
\Lambda _{0} &  :=  &                  E_{8}(2)\oplus U^{3},         \label{the important invariant lattice 1}\\   
\Lambda _{1} &  :=  & \frac{1}{2}E_{8}(2)\oplus U^{3}.         \label{the important invariant lattice 2}
\end{eqnarray}
We regard $\Lambda _{0}$ as a submodule of $\Lambda _{1}$ in a natural way.   
Then $\Lambda _{1}$ is the dual lattice of $\Lambda _{0}$.   
The following proposition reduces the construction of a Nikulin involution 
to a purely arithmetic problem. 

\begin{proposition}[\cite{Ni2} Section 2.1 and Lemma 2.2.4]\label{reduction to arithmetic problem} 
Let $X$ be an algebraic $K3$ surface. 
Suppose that one is given a primitive embedding $T_{X} \subset \Lambda _{0}$ of lattices.  
Then there exists a Nikulin involution $\iota : X \to X$ such that,   
if we denote $Y = \widetilde{X/\langle \iota \rangle }$,  
then $T_{Y}$ is Hodge isometric to the lattice 
\begin{equation}\label{Hodge structure of the quotient}
T := \left( \, T_{X} \otimes {\Q} \, \cap \, \Lambda _{1}  \right) \, (2),      
\end{equation}
where the Hodge structure of $T$ is induced from $T_{X}$.  
Conversely, if one has a rational map $X \dashrightarrow Y$ of degree $2$ 
to a $K3$ surface $Y$, then there exists a primitive embedding $T_{X} \subset \Lambda _{0}$  
such that $T_{Y}$ is Hodge isometric to the lattice $T$ defined by $(\ref{Hodge structure of the quotient})$.         
\end{proposition}

Shioda-Inose structure is a special kind of Nikulin involution.  

\begin{definition}[\cite{Mo}]\label{def of S-I structure}
An algebraic $K3$ surface $X$ admits a {\it Shioda-Inose structure\/} 
if there exists a Kummer surface $Y={\rm Km\/}A$ and a rational map $\pi : X \dashrightarrow Y$ of degree $2$ 
such that $\pi _{\ast}$ induces a Hodge isometry $T_{X}(2) \simeq T_{Y}$.  
\end{definition} 

There is a lattice-theoretic characterization of  
$K3$ surfaces admitting Shioda-Inose structures due to Morrison.   

\begin{theorem}[\cite{Mo} Theorem 6.3]\label{Morrison condition} 
An algebraic $K3$ surface $X$ admits a Shioda-Inose structure 
if and only if there exists a primitive embedding 
$T_{X} \hookrightarrow U^{3}$ of lattices.   
\end{theorem}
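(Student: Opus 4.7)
The plan is to treat the two directions separately, noting that Proposition \ref{reduction to arithmetic problem} already reduces the existence of the Nikulin involution to a lattice-embedding question, so the content here is really about recognizing when the quotient is a Kummer surface.

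For the necessity ($\Rightarrow$), suppose $X$ admits a Shioda-Inose structure realized by a degree $2$ rational map $\pi : X \dashrightarrow Y = \mathrm{Km}\, A$ with $\pi_{\ast}$ inducing a Hodge isometry $T_{X}(2)\simeq T_{Y}$. For a Kummer surface one has the well-known Hodge isometry $T_{\mathrm{Km}\,A}\simeq T_{A}(2)$, coming from the rational quotient $A \dashrightarrow \mathrm{Km}\, A$, so $T_{X}(2)\simeq T_{A}(2)$ forces $T_{X}\simeq T_{A}$ as Hodge lattices. Since $T_{A}$ sits as a primitive sublattice of $H^{2}(A,\Z)\simeq U^{3}$, the composition gives the required primitive embedding $T_{X}\hookrightarrow U^{3}$.

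For the sufficiency ($\Leftarrow$), begin with a primitive embedding $T_{X}\hookrightarrow U^{3}$ and regard it as a primitive embedding into the second summand of $\Lambda_{0}=E_{8}(2)\oplus U^{3}$; primitivity is preserved because $U^{3}$ is itself a primitive summand of $\Lambda_{0}$. Proposition \ref{reduction to arithmetic problem} then produces a Nikulin involution $\iota$ on $X$ whose quotient $Y=\widetilde{X/\langle\iota\rangle}$ has transcendental lattice Hodge isometric to $(T_{X}\otimes\Q\cap\Lambda_{1})(2)$. Since $T_{X}\otimes\Q$ lies inside $U^{3}\otimes\Q$, and since the common $U^{3}$ summand of $\Lambda_{0}\subset\Lambda_{1}=\tfrac{1}{2}E_{8}(2)\oplus U^{3}$ satisfies $U^{3}\otimes\Q\cap\Lambda_{1}=U^{3}$, the primitivity of $T_{X}\hookrightarrow U^{3}$ yields
\[
T_{X}\otimes\Q\,\cap\,\Lambda_{1}\;=\;T_{X}\otimes\Q\,\cap\,U^{3}\;=\;T_{X}.
\]
Hence $T_{Y}\simeq T_{X}(2)$ Hodge isometrically, and the degree $2$ map $\pi : X\dashrightarrow Y$ induces the Hodge isometry $T_{X}(2)\simeq T_{Y}$ required by Definition \ref{def of S-I structure}, \emph{provided} one can show that $Y$ is actually a Kummer surface.

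Closing this last gap is the main obstacle, and it is the place where the lattice hypothesis pays off: one invokes Nikulin's criterion that an algebraic $K3$ surface $Y$ is a Kummer surface if and only if its transcendental lattice has the form $L(2)$ for some even lattice $L$ admitting a primitive embedding into $U^{3}$. Since we have exhibited $T_{Y}\simeq T_{X}(2)$ with $T_{X}\hookrightarrow U^{3}$ primitive, Nikulin's criterion applies and forces $Y\simeq\mathrm{Km}\,A$ for some abelian surface $A$, completing the construction of the Shioda-Inose structure. Thus the whole proof is arithmetic bookkeeping inside $\Lambda_{0}\subset\Lambda_{1}$, anchored at the two ends by (i) Proposition \ref{reduction to arithmetic problem} and (ii) Nikulin's transcendental characterization of Kummer surfaces.
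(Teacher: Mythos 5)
The paper itself does not prove this statement --- it is quoted from Morrison \cite{Mo} --- so I am comparing your argument with Morrison's original proof. Your proof is essentially correct and follows the same overall strategy: the necessity direction is exactly the standard argument ($T_X\simeq T_A$ via $T_{{\rm Km}A}\simeq T_A(2)$, then use the primitive embedding $T_A\subset H^2(A,\Z)\simeq U^3$), and the sufficiency direction produces a degree~$2$ quotient $Y$ with $T_Y\simeq T_X(2)$ and then recognizes $Y$ as a Kummer surface. Your computation $T_X\otimes\Q\cap\Lambda_1=T_X$ for an embedding landing in the $U^3$ summand is correct, and it is the same trick the paper itself uses (in reverse, via the lattice $L\subset 2\Lambda_1$) in Theorem \ref{sandwich for Shioda-Inose}. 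Where you differ from Morrison is in how the Nikulin involution is obtained: he constructs it by exhibiting $E_8^2\oplus U$ inside $NS(X)$ and swapping the two $E_8$'s, whereas you get it abstractly from Proposition \ref{reduction to arithmetic problem}; both are legitimate, and your route is shorter given that this proposition is already available in the paper.

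Two caveats. First, be aware of where the real weight of the argument sits: the ``criterion'' you invoke (a K3 surface $Y$ is Kummer iff $T_Y\simeq L(2)$ with $L$ even and primitively embeddable in $U^3$) is not Nikulin's criterion verbatim --- Nikulin's is the geometric one about sixteen disjoint nodal curves whose sum is $2$-divisible in $NS(Y)$ --- but Morrison's transcendental reformulation of it, whose proof needs the surjectivity of the period map for two-dimensional complex tori (to manufacture the abelian surface $A$ with $T_A\simeq L$, and to see $A$ is algebraic), plus the extension of the Hodge isometry $T_Y\simeq T_{{\rm Km}A}$ to $H^2$ and the Torelli theorem. So the proof is not ``arithmetic bookkeeping'' alone; the transcendental input is concentrated in that cited step, and since it comes from the same source \cite{Mo} as the theorem itself, you should cite it as such rather than as a black box. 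Second, Definition \ref{def of S-I structure} requires that $\pi_\ast$ itself induce the isometry $T_X(2)\simeq T_Y$, while Proposition \ref{reduction to arithmetic problem} only asserts an abstract Hodge isometry; this is easily repaired via $(\ref{relation of the Hodge structures})$: since $\pi_\ast T_X=T_X(2)\subseteq T_Y$ and both lattices have the same discriminant, the inclusion has index one, so $\pi_\ast$ is the required isometry. With these points made explicit, your proof is complete.
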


Shioda \cite{Sh}, extending the work of Inose \cite{In}, 
proved a Kummer sandwich theorem for elliptic $K3$ surfaces with section and with two $II^{\ast}$-fibers 
over an arbitrary algebraically closed field of characteristic $\ne 2, 3$.  
When the ground field is ${\C}$, one can characterize the $K3$ surfaces studied in \cite{Sh} by the existence of 
Shioda-Inose structures such that the corresponding Abelian surfaces are products of elliptic curves. 
Here we shall derive in a transcendental way 
a Kummer sandwich theorem for all complex algebraic $K3$ surfaces with Shioda-Inose structure.   
We denote the Dynkin diagram of $E_{8}$ by 
\begin{equation*}\label{Dynkin diagram}
\xygraph{
    \bullet ([]!{+(0,-.3)} {v_1}) - [r]
    \bullet ([]!{+(0,-.3)} {v_2}) - [r]
    \bullet ([]!{+(.3,-.3)} {v_3}) (
        - [d] \bullet ([]!{+(.3,0)} {v_8}),
        - [r] \bullet ([]!{+(0,-.3)} {v_4})
        - [r] \bullet ([]!{+(0,-.3)} {v_5})
        - [r] \bullet ([]!{+(0,-.3)} {v_6})
        - [r] \bullet ([]!{+(0,-.3)} {v_7})
)}
\end{equation*}
We identify the ${\Z}$-modules underlying $E_{8}$ and $E_{8}(2)$ in a natural way, 
and regard the above set $\{ v_{i} \} _{i=1}^{8}$ as a basis of $E_{8}(2)$.  
Then we have 
$(v_{i}, v_{i})=-4$ for $i=1,\cdots ,8$,  
$(v_{i}, v_{j})=2$ if  $v_{i}$ and $v_{j}$ are connected by an edge, 
and $(v_{i}, v_{j})=0$ otherwise. 
Let $\{ e_{i}, f_{i} \} _{i=1}^{3}$ be the standard basis of $U^{3}$.   
We have 
$(e_{i}, e_{j}) = (f_{i}, f_{j}) =0$ 
and $(e_{i}, f_{j})=\delta _{i j}$. 
For $i=1, 2, 3$, we define the vectors $l_{i}, m_{i} \in \Lambda _{0} = E_{8}(2) \oplus U^{3}$ by 
\begin{eqnarray*}
l_{1}    & = &  -v_{5}+v_{7}+2(e_{1}+f_{1}),          \\ 
m_{1}  & = &  -v_{4},                                                 \\ 
l_{2}    & = &    v_{1}+v_{8}+2(e_{2}+f_{2}),         \\ 
m_{2}  & = &   v_{2},                                                 \\ 
l_{3}     & = &   v_{7}+v_{8}+2(e_{1}+e_{2}+e_{3}+f_{3}),      \\  
m_{3}  & = &   v_{6},  
\end{eqnarray*}
and put $L := \langle l_{1}, m_{1}, l_{2}, m_{2}, l_{3}, m_{3}  \rangle $.  

\begin{lemma}\label{embedding of U(2)^{3}}
The sublattice $L\subset \Lambda _{0}$ has the following properties.  

$(1)$ $L\simeq U(2)^{3}$.  

$(2)$ $L\subset 2\Lambda _{1}$.   

$(3)$ $L$ is a primitive sublattice of $\Lambda _{0}$.   
\end{lemma}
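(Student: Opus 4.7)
The plan is to verify each of the three claims by direct computation with the explicit generators, breaking every inner product into its $E_{8}(2)$- and $U^{3}$-components.

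For (1), I would tabulate the full $6\times 6$ Gram matrix of $L$ in the basis $(l_{1},m_{1},l_{2},m_{2},l_{3},m_{3})$. The self-pairings $(l_{i},l_{i})=0$ arise from a cancellation $-8+8$: the $E_{8}(2)$-part of each $l_{i}$ is a sum of two non-adjacent basis vectors contributing $-8$, while the $U^{3}$-part has the form $2(e_{i}+f_{i})$ (or the analogous sum for $l_{3}$) and contributes $+8$. The entries $(l_{i},m_{i})=2$ come from one adjacent pair in the $E_{8}$-diagram (for instance, $v_{5}$ and $v_{4}$ in $(l_{1},m_{1})$). For the cross pairings $(l_{i},l_{j})$ with $i\neq j$, the only nonzero $E_{8}$ contribution comes from shared vertices like $v_{7}$ in $(l_{1},l_{3})$ or $v_{8}$ in $(l_{2},l_{3})$; each such $-4$ is killed by a matching cross term in $U^{3}$ such as $(2f_{1},2e_{1})=4$, arising from the appearance of $2e_{1}$ (respectively $2e_{2}$) inside $l_{3}$. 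All remaining entries $(m_{i},m_{j})$ and $(l_{i},m_{j})$ with $i\neq j$ vanish because the $E_{8}$-vectors involved are pairwise non-adjacent. The resulting Gram matrix is block diagonal with three identical blocks $\bigl(\begin{smallmatrix} 0 & 2 \\ 2 & -4\end{smallmatrix}\bigr)$; replacing $m_{i}$ by $m_{i}+l_{i}$ turns each block into $U(2)=\bigl(\begin{smallmatrix} 0 & 2 \\ 2 & 0\end{smallmatrix}\bigr)$, since $(m_{i}+l_{i},m_{i}+l_{i})=-4+4=0$ and $(l_{i},m_{i}+l_{i})=2$. Hence $L\simeq U(2)^{3}$.

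For (2), inspection of the formulas shows that each $l_{i}$ lies in $E_{8}\oplus 2U^{3}$ (its $U$-part has the explicit overall factor $2$) and each $m_{i}$ lies in $E_{8}$. Since $2\Lambda_{1}=2\bigl(\tfrac{1}{2}E_{8}\oplus U^{3}\bigr)=E_{8}\oplus 2U^{3}$, this gives $L\subseteq 2\Lambda_{1}$.

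For (3), note that $\det L=(-4)^{3}=-64$ is a power of $2$, so primitivity of $L$ in $\Lambda_{0}$ is equivalent to $2$-saturation. Suppose $2x\in L$ for some $x=x_{E}+x_{U}\in E_{8}\oplus U^{3}=\Lambda_{0}$, and expand $2x=\sum_{i}a_{i}l_{i}+\sum_{i}b_{i}m_{i}$. Reading off the $E_{8}$-coefficients gives
\[
2x_{E}=a_{2}v_{1}+b_{2}v_{2}-b_{1}v_{4}-a_{1}v_{5}+b_{3}v_{6}+(a_{1}+a_{3})v_{7}+(a_{2}+a_{3})v_{8},
\]
so for $x_{E}$ to lie in $E_{8}$ each of $a_{1},a_{2},b_{1},b_{2},b_{3},a_{1}+a_{3},a_{2}+a_{3}$ must be even; this forces $a_{3}$ to be even as well. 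Therefore all coefficients $a_{i},b_{i}$ are even and $x\in L$, proving primitivity.

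The main obstacle is the careful bookkeeping in the Gram matrix computation in (1): missing a single cross term in $U^{3}$, such as the contribution $(2f_{2},2e_{2})=4$ to $(l_{2},l_{3})$, would make a supposedly-zero off-diagonal entry come out as $-4$ and destroy the $U(2)^{3}$ identification.
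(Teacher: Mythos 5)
Your proof is correct, and for parts (1) and (2) it is essentially the computation the paper leaves implicit (the paper disposes of (1) as ``direct calculations'' and calls (2) obvious). Where you genuinely diverge is (3): you prove primitivity by observing that $|\det L|=64$ forces the saturation of $L$ in $\Lambda_0$ to have $2$-power index, and then verify $2$-saturation by reading off the $E_8$-coordinates of $\sum_i a_il_i+\sum_i b_im_i$ and showing the resulting congruences force all $a_i,b_i$ to be even; the paper instead extends $\{l_i,m_i\}_{i=1}^3$ to a basis of $\Lambda_0$ by adjoining $v_3,v_5,e_1,f_1,e_2,f_2,e_3,f_3$, from which primitivity is immediate. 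Both arguments are sound: the paper's is a one-line check that these $14$ vectors span $E_8(2)\oplus U^3$, while yours isolates exactly which parity conditions on the coefficients are needed and does not depend on a clever choice of complementary vectors. One small slip in your bookkeeping for (1): the entry $(l_1,m_3)$ does vanish, but not for the reason you give --- the $E_8$-vectors are not pairwise non-adjacent there, since $v_5$ and $v_7$ are both adjacent to $v_6=m_3$; the vanishing comes from the sign cancellation $-(v_5,v_6)+(v_7,v_6)=-2+2=0$, of the same kind as the $-8+8$ and $-4+4$ cancellations you track elsewhere. This does not affect the outcome: the Gram matrix is indeed block diagonal with blocks $\left(\begin{smallmatrix}0&2\\2&-4\end{smallmatrix}\right)$, and your base change $m_i\mapsto m_i+l_i$ correctly exhibits $L\simeq U(2)^3$.
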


\begin{proof}
We can extend the set $\{ l_{i}, m_{i} \} _{i=1}^{3}$ to a ${\Z}$-basis of $\Lambda _{0}$  
by adding the set of vectors $\{ v_{3}, v_{5}, e_{1}, f_{1}, e_{2}, f_{2}, e_{3}, f_{3} \} $.  
Thus $L$ is primitive in $\Lambda _{0}$.  
The assertion $(2)$ is obvious 
and the assertion $(1)$ is proved by direct calculations.  
\end{proof} 

\begin{theorem}\label{sandwich for Shioda-Inose} 
Let $X$ be an algebraic $K3$ surface admitting a Shioda-Inose structure 
$X \dashrightarrow Y = {\rm Km\/}A$.  
Then there exists a Nikulin involution $\iota $ on $Y$ such that 
the minimal resolution of the quotient surface $Y/\langle \iota \rangle $ is isomorphic to $X$.  
In particular, one has the following sequence of rational maps of degree 2:  
\begin{equation}\label{sandwich diagram}
{\rm Km\/}A \dashrightarrow X \dashrightarrow {\rm Km\/}A.  
\end{equation}
\end{theorem}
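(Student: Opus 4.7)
The plan is to apply Proposition \ref{reduction to arithmetic problem} to $Y = {\rm Km\/}A$ by producing an appropriate primitive embedding $T_Y \hookrightarrow \Lambda_0$. By Definition \ref{def of S-I structure} the Shioda-Inose structure yields a Hodge isometry $T_Y \simeq T_X(2)$, and Morrison's Theorem \ref{Morrison condition} supplies a primitive embedding $T_X \hookrightarrow U^3$. Rescaling by $2$ gives a primitive embedding $T_X(2) \hookrightarrow U(2)^3$, which I compose with the primitive inclusion $U(2)^3 \simeq L \hookrightarrow \Lambda_0$ provided by Lemma \ref{embedding of U(2)^{3}} to obtain a primitive embedding
\[ T_Y \simeq T_X(2) \hookrightarrow L \hookrightarrow \Lambda_0, \]
using that primitivity is preserved both by rescaling and by composition of primitive embeddings.

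The core calculation is to identify the lattice $T = (T_Y \otimes \Q \cap \Lambda_1)(2)$ appearing in Proposition \ref{reduction to arithmetic problem} with $T_X$. Since $L \subset 2\Lambda_1$ by Lemma \ref{embedding of U(2)^{3}}(2), we have $\frac{1}{2}T_Y \subseteq \Lambda_1$; conversely, any $v \in T_Y \otimes \Q \cap \Lambda_1$ satisfies $2v \in 2\Lambda_1 \subseteq \Lambda_0$ and $2v \in T_Y \otimes \Q$, so the primitivity of $T_Y$ in $\Lambda_0$ forces $2v \in T_Y$. Hence $T_Y \otimes \Q \cap \Lambda_1 = \frac{1}{2}T_Y$. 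A short computation of the induced bilinear forms, using $T_Y \simeq T_X(2)$ and that the form on $\Lambda_1$ extends that on $\Lambda_0$, shows that this intersection, with the form inherited from $\Lambda_1$, is isometric to $T_X(\frac{1}{2})$, whence $T \simeq T_X$ as Hodge lattices (the Hodge structure is transported along the Shioda-Inose map). Applying Proposition \ref{reduction to arithmetic problem} to $Y$ with this embedding therefore produces a Nikulin involution $\iota$ on $Y$ for which $T_{\widetilde{Y/\langle\iota\rangle}}$ is Hodge-isometric to $T_X$.

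The remaining step, which I expect to be the main obstacle, is to upgrade the Hodge isometry $T_{\widetilde{Y/\langle\iota\rangle}} \simeq T_X$ to an actual isomorphism of $K3$ surfaces $\widetilde{Y/\langle\iota\rangle} \simeq X$. The strategy is to invoke the Torelli theorem after extending the transcendental isometry to a Hodge isometry of the full $K3$ cohomology lattices sending a K\"ahler class to a K\"ahler class. For $K3$ surfaces carrying a Shioda-Inose structure the primitive embedding of the transcendental lattice into the $K3$ lattice is tightly controlled (this underlies Morrison's Theorem \ref{Morrison condition}), and Nikulin's theory of primitive embeddings should supply the required extension, possibly after modifying by an isometry on the N\'eron--Severi part. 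The sandwich ${\rm Km\/}A \dashrightarrow X \dashrightarrow {\rm Km\/}A$ is then obtained by composing the new rational quotient $Y \dashrightarrow \widetilde{Y/\langle\iota\rangle} \simeq X$ with the original Shioda-Inose map $X \dashrightarrow Y$.
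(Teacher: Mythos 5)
Your proposal is correct and follows essentially the same route as the paper: embed $T_Y$ primitively into $\Lambda_0$ through the sublattice $L\simeq U(2)^3$ of Lemma \ref{embedding of U(2)^{3}} so that it lands in $2\Lambda_1$, compute that the lattice $T$ of Proposition \ref{reduction to arithmetic problem} is Hodge isometric to $T_X$, and conclude $\widetilde{Y/\langle\iota\rangle}\simeq X$ by extending the transcendental Hodge isometry (the paper cites Morrison for this, as you do in substance) and applying the Torelli theorem. The only cosmetic difference is that the paper obtains the primitive embedding $T_Y\hookrightarrow U(2)^3$ directly from $T_A\subset H^2(A,\Z)\simeq U^3$ rather than via Theorem \ref{Morrison condition}, which amounts to the same thing.
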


\begin{proof}
By the definitions,   
we have the Hodge isometries 
\begin{equation*}
T_{Y} \simeq T_{A}(2),  \; \; \; T_{X}\simeq T_{A}.  
\end{equation*}
Since $T_{A}$ is embedded into $H^{2}(A, {\Z}) \simeq U^{3}$ primitively,  
there exists a primitive embedding 
$\varphi : T_{Y} \hookrightarrow U(2)^{3}$.   
By composing $\varphi $ with an isometry $U(2)^{3} \simeq L$,   
we obtain a primitive embedding 
$\psi : T_{Y} \hookrightarrow \Lambda _{0}$ such that 
$\psi ( T_{Y} ) \subset 2\Lambda _{1}$.  
We have 
\begin{equation*}
\psi (T_{Y}) \otimes {\Q} \: \cap \: \Lambda _{1}  \; = \; 
\frac{1}{2}\psi (T_{Y}).  
\end{equation*}
By Proposition \ref{reduction to arithmetic problem},  
there exists a Nikulin involution $\iota : Y \to Y$ 
such that for the minimal resolution $Z$ of $Y/\langle \iota \rangle $ 
the transcendental lattice $T_{Z}$ is Hodge isometric to 
\begin{equation*}
\frac{1}{2}T_{Y}(2) \simeq \frac{1}{2}T_{A}(4) \simeq T_{A} \simeq T_{X}.  
\end{equation*}
Since a Hodge isometry $T_{Z} \simeq T_{X}$ can be extended to 
a Hodge isometry $H^{2}(Z, {\Z}) \simeq H^{2}(X, {\Z})$ (cf. \cite{Mo} Corollary 2.10),  
we have $Z \simeq X$ by the Torelli theorem.  
\end{proof} 

The rational quotient map  
$\pi:{\rm Km\/}A \dashrightarrow X$   
constructed in Theorem \ref{sandwich for Shioda-Inose} 
induces a Hodge isometry    
$\pi ^{\ast} : T_{X}(2) \to T_{ {\rm Km\/}A}$.   
Thus a $K3$ surface $X$ with Shioda-Inose structure can be defined 
not only as a double cover of a Kummer surface ${\rm Km\/}A$ but also as a double quotient of ${\rm Km\/}A$, 
which exhibits an isogeny relation between $X$ and ${\rm Km\/}A$.  
Unfortunately, as we rely on the Torelli theorem, 
our Kummer sandwich theorem is not explicit as in \cite{In}, \cite{Sh}, 
and our argument works only over ${\C}$.   

$K3$ surfaces with Shioda-Inose structure are particular double covers of Kummer surfaces. 
Now, is it true in general that 
a double cover $X$ of a Kummer surface ${\rm Km\/}A$ admits a double covering 
${\rm Km\/}A \dashrightarrow X$ of the opposite direction, 
as like isogenies of elliptic curves?  
Here is a negative example. 




\begin{example}\label{nega exple}
Let $A$ be an Abelian surface with $T_{A} \simeq U \oplus \langle 2 \rangle \oplus  \langle -2 \rangle $ 
and $X$ be the $K3$ surface with $T_{X}$ Hodge isometric to $2T_{A}$.                                 
Then there exists a rational map $X \dashrightarrow {\rm Km\/}A$ of degree $2$,       
but  there does {\it not\/} exist a rational map ${\rm Km\/}A \dashrightarrow X$ of degree $2$. 
\end{example}

\begin{proof}  
The existence of a double covering $X \dashrightarrow {\rm Km\/}A$ follows from 
Mehran's criterion for double covers of Kummer surfaces (\cite{Me} Theorem 3.1). 
Suppose that we have a rational map ${\rm Km\/}A \dashrightarrow X$ of degree $2$.  
By Proposition \ref{reduction to arithmetic problem}  
there exists a primitive embedding $T_{A}(2) \hookrightarrow \Lambda _{0}$ such that  
\begin{equation}\label{a condition}
T_{A}(2)\otimes {\Q} \: \cap \: \Lambda _{1} = T_{A}(2). 
\end{equation}
Via this embedding, we regard $T_A(2)$ as a primitive sublattice of $\Lambda_0$. 
Let $\pi : T_{A}(2) \to U^{3}$ be the orthogonal projection,  
which is injective by the condition $(\ref{a condition})$.  
Let $M$ be the lattice $\pi (T_{A}(2))$ and $N$ be the primitive closure of $M$ in $U^{3}$.   
By the condition $(\ref{a condition})$ again,  
the Abelian group $N/M$ has no $2$-component.   
For an even lattice $L$ 
let $L^{\vee}$ be the dual lattice of $L$,  
$D_{L}=L^{\vee}/L$ be the discriminant group of $L$, 
and $(D_{L})_{2}$ be the $2$-component of $D_{L}$.   
We see from the inclusions 
$M \subset N\subset N^{\vee} \subset M^{\vee}$  
that \begin{equation}\label{isomorphisms of 2-components}
(D_{M})_{2} \simeq (D_{N})_{2} \simeq (D_{N^{\perp}\cap U^{3}})_{2}.  
\end{equation}
The second isomorphism follows from the fact that $N$ is a primitive sublattice of the unimodular lattice $U^{3}$.   
In particular,  
the length of $(D_{M})_{2}$ is less than or equal to $2$.   
On the other hand,  
we have $(v, w) \in 2{\Z}$ for every $v, w \in M$.   
Thus we have $\frac{1}{2}M \subset M^{\vee}$,   
which is absurd.   
\end{proof}

\begin{remark}
It follows from \cite{Ni2} Theorem 1.3 that  
for $X$ and $A$ as in Example \ref{nega exple}, 
there nevertheless exists a rational map  
${\rm Km\/}A \dashrightarrow X$ of degree $2^{\mu}$  
for some $\mu > 1$.    
\end{remark}

\section{Rational maps to Kummer surfaces}\label{sec3} 

In this section we study rational maps from $K3$ surfaces to Kummer surfaces in general.  
We shall use the following. 

\begin{proposition}[\cite{Ma} Section 4]\label{relative Jacobian}
Let $X$ and $Y$ be algebraic $K3$ surfaces with ${\rm rk}(T_X)={\rm rk}(T_Y)\leq9$ such that 
there exists an embedding $T_X \to T_Y$ of lattices preserving the periods. 
Then there exists a sequence $X_1=X, X_2, \cdots, X_n=Y$ of $K3$ surfaces such that 
$X_{i+1}$ is isomorphic to the surface underlying the Jacobian fibration of an elliptic fibration $\pi_i : X_i \to {\proj}^1$. 
In particular, for a line bundle $L \in {\rm Pic}(X)$ we have a rational map $X_{i} \dashrightarrow X_{i+1}$ defined by 
$x \mapsto \mathcal{O}_{F}(dx) \otimes L^{-1}$ where $F$ is the $\pi_{i}$-fiber containing $x \in X_i$ and $d=(L.F)$. 
\end{proposition}

We shall characterize $K3$ surfaces $X$ dominating Kummer surfaces by the lattices $T_X$.  

\begin{proposition}\label{generalized Morrison criterion}
For an algebraic $K3$ surface $X$ the following conditions are equivalent.   

$({\rm i\/})$ There exists a dominant rational map 
$X \dashrightarrow {\rm Km\/}A$ to some Kummer surface ${\rm Km\/}A$.  

$({\rm ii\/})$ There exists an embedding $T_{X}\otimes{\Q}\hookrightarrow U^{3}\otimes {\Q}$  
of quadratic spaces.  

$({\rm iii\/})$ There exists an embedding $T_{X}\hookrightarrow U^{3}$ of lattices.  
\end{proposition}

\begin{proof}
$({\rm i\/}) \Rightarrow ({\rm ii\/})$:  
A rational map 
$f : X \dashrightarrow {\rm Km\/}A$ 
of finite degree $d$ induces a Hodge isometry 
\begin{equation*}
f_{\ast} : T_{X}(d) \otimes {\Q} \stackrel{\simeq}{\to} T_{{\rm Km\/}A} \otimes {\Q} \simeq T_{A}(2) \otimes {\Q}. 
\end{equation*}
Then the quadratic space $T_{X}\otimes {\Q}$ is isometric to $T_{A}(2d)\otimes {\Q}$ and thus is embedded into   
$H^{2}(A, {\Q})(2d) \simeq U^{3}(2d) \otimes {\Q}$. 
By the property $U^{3}(2d)\otimes {\Q} \simeq U^{3} \otimes {\Q}$ of the lattice $U$, 
we obtain an embedding $T_{X} \otimes {\Q} \hookrightarrow U^{3} \otimes {\Q}$ of quadratic spaces. 

$({\rm ii\/}) \Rightarrow ({\rm iii\/})$:  
Recall that an even lattice of rank $r$ can be embedded (primitively) into $U^r$. 
In particular, we may assume that ${\rm rk\/}(T_{X}) = 4$ or $5$.   
When ${\rm rk\/}(T_X) = 4$, the condition $({\rm ii\/})$ is equivalent to 
the existence of an embedding $U\otimes{\Q}\to T_X\otimes{\Q}$ 
by Witt's theorem for $(T_X\otimes{\Q})^{\perp}\cap U^3\otimes{\Q}$. 
Thus we have an isotropic vector in $T_X$.  
Let $T$ be a maximal even overlattice of $T_X$. 
A primitive isotropic vector $v\in T$ induces an embedding $U\to T$ because $(v, T)={\Z}$.    
Hence $T \simeq U \oplus L$ for some rank $2$ lattice $L$ so that $T$ can be embedded into $U^3$.   
When ${\rm rk\/}(T_{X}) = 5$, as in the case of ${\rm rk\/}(T_X) = 4$,  
the condition $({\rm ii\/})$ is equivalent to the existence of a rank $2$ totally isotropic sublattice of $T_X$.  
Then every maximal even overlattice of $T_X$ is of the form $T=U^2\oplus L$, ${\rm rk\/}(L)=1$,  
and thus can be embedded into $U^3$.    

$({\rm iii\/}) \Rightarrow ({\rm i\/})$: 
We fix an embedding $T_{X} \subset U^{3}$.  
Let $T$ be the primitive closure of $T_{X}$ in $U^{3}$ and endow $T$ with the Hodge structure induced from $T_{X}$.  
We regard $T$ as a primitive sublattice of $U^{3}\oplus E_{8}^{2}$.  
By the surjectivity of the period map,  
there exists a $K3$ surface $Y$  
with $T_{Y}$ Hodge isometric to $T$.   
We have an embedding  
$T_{X} \hookrightarrow T_{Y}$ of finite index 
which preserves the periods.     
It follows from Proposition \ref{relative Jacobian} that 
there exists a dominant rational map $X \dashrightarrow Y$. 
Since the lattice $T_{Y}$ can be embedded primitively into $U^{3}$,  
the $K3$ surface $Y$ admits a Shioda-Inose structure $Y \dashrightarrow  {\rm Km\/}A$ by Theorem \ref{Morrison condition}.  
\end{proof}  

Proposition \ref{generalized Morrison criterion} is analogous to Theorem \ref{Morrison condition}:  
replacing Shioda-Inose structures by general rational maps corresponds to 
replacing primitive embeddings of lattices by embeddings of rational quadratic spaces. 

An Abelian surface $A$ is a product of two elliptic curves if and only if  
$T_{A}$ can be embedded primitively into $U^{2}$.  
Hence by a similar argument as in the above proof we have 
the following variant of Proposition \ref{generalized Morrison criterion}.   

\begin{proposition}\label{a variant of generalized Morrison criterion}
For an algebraic $K3$ surface $X$    
the following conditions are equivalent.  

$({\rm i\/})$ There exists a dominant rational map 
$X \dashrightarrow {\rm Km\/}A$ 
to some Kummer surface ${\rm Km\/}A$,  
where $A$ is a product of two elliptic curves.   

$({\rm ii\/})$ There exists an embedding 
$T_{X} \otimes {\Q} \hookrightarrow U^{2} \otimes {\Q}$  
of quadratic spaces.  

$({\rm iii\/})$ There exists an embedding $T_{X}\hookrightarrow U^{2}$ of lattices.  
\end{proposition}

By using Proposition \ref{generalized Morrison criterion} 
we deduce the next theorem, from which Theorem \ref{main} follows immediately.  

\begin{theorem}\label{rational map to Kummer}
Let $X$ be an algebraic $K3$ surface 
and ${\rm Km\/}A$ be an algebraic Kummer surface.  
Then there exists a dominant rational map $X \dashrightarrow  {\rm Km\/}A$ 
if and only if there exists a Hodge isometry 
$T_{X} \otimes {\Q} \simeq T_{A}(n)  \otimes {\Q}$ 
for some natural number $n$.  
\end{theorem}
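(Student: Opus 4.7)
\emph{``Only if'' direction.} For a dominant rational map $f:X\dashrightarrow {\rm Km}A$ of degree $d$, the pullback $f^\ast$ carries $T_{{\rm Km}A}\otimes\Q$ into $T_X\otimes\Q$ (algebraic classes pull back to algebraic ones) and satisfies $(f^\ast\alpha,f^\ast\beta)_X=d(\alpha,\beta)_{{\rm Km}A}$, giving a Hodge-isometric embedding $T_{{\rm Km}A}(d)\otimes\Q\hookrightarrow T_X\otimes\Q$. Since $f^\ast$ maps $H^{2,0}({\rm Km}A)$ isomorphically onto $H^{2,0}(X)$ and $T\otimes\Q$ is the smallest rational sub-Hodge structure containing $H^{2,0}$, this embedding is an isomorphism. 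Combined with $T_{{\rm Km}A}\simeq T_A(2)$ this yields $T_X\otimes\Q\simeq T_A(2d)\otimes\Q$, so $n=2d$ works.

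\emph{``If'' direction.} My strategy is to first produce a dominant rational map to \emph{some} Kummer surface via Proposition \ref{generalized Morrison criterion}, and then match the target with ${\rm Km}A$ through an isogeny. Starting from $T_X\otimes\Q\simeq T_A(n)\otimes\Q$ and the primitive embedding $T_A\hookrightarrow H^2(A,\Z)\simeq U^3$, we get $T_X\otimes\Q\hookrightarrow U^3(n)\otimes\Q$. Any two totally hyperbolic rational quadratic spaces of the same rank are isometric, so $U^3(n)\otimes\Q\simeq U^3\otimes\Q$, yielding an embedding $T_X\otimes\Q\hookrightarrow U^3\otimes\Q$ of quadratic spaces. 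Proposition \ref{generalized Morrison criterion} then produces a dominant rational map $X\dashrightarrow{\rm Km}B$ for some abelian surface $B$. Applying the ``only if'' direction to this map gives $T_X\otimes\Q\simeq T_B(m)\otimes\Q$ for some $m$, which combined with the hypothesis yields a Hodge isomorphism $T_A\otimes\Q\simeq T_B\otimes\Q$ up to a global scaling of the quadratic form. Invoking the fact that for complex abelian surfaces the rational transcendental Hodge structure determines the surface up to isogeny, there is an isogeny $\psi:B\to A$; $\psi$ descends to a dominant rational map ${\rm Km}B\dashrightarrow{\rm Km}A$ of finite degree, and composing with $X\dashrightarrow{\rm Km}B$ produces the desired $X\dashrightarrow{\rm Km}A$.

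\emph{Main obstacle.} The delicate step is the last one, deducing $A\sim B$ from a Hodge isomorphism $T_A\otimes\Q\simeq T_B\otimes\Q$ (up to scaling). This can be justified via Mumford--Tate theory: $T_A\otimes\Q$ is a sub-Hodge structure of $\wedge^2 H^1(A,\Q)$, and its rational Hodge structure pins down the Mumford--Tate group of $H^1(A,\Q)$ and hence $A$ up to isogeny, while a global rescaling of the form leaves the underlying rational Hodge module unchanged. The remaining ingredients---the pullback computation on transcendental lattices, the rational isometry $U^3(n)\otimes\Q\simeq U^3\otimes\Q$, and the invocation of Proposition \ref{generalized Morrison criterion}---are all essentially formal.
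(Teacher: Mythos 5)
Your proposal is correct and takes essentially the same route as the paper: the ``only if'' direction via the (co)homological transfer giving a Hodge isometry $T_{X}\otimes\Q\simeq T_{A}(2d)\otimes\Q$, and the ``if'' direction by feeding the hypothesis into Proposition \ref{generalized Morrison criterion} to obtain $X\dashrightarrow {\rm Km\/}B$, deducing that $B$ is isogenous to $A$ from the resulting rational Hodge isometry up to scaling, and composing with the induced map ${\rm Km\/}B\dashrightarrow{\rm Km\/}A$. The only divergence is your attempted justification of the isogeny step via Mumford--Tate groups, which as sketched is imprecise (the Mumford--Tate group of $H^{1}$ does not determine $A$ up to isogeny); but the fact actually needed --- that a rational Hodge isometry of transcendental lattices, up to scaling, forces the abelian surfaces to be isogenous --- is precisely what the paper also invokes without further argument.
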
 
  
\begin{proof}
It suffices to prove the ``if'' part.  
Assume the existence of a Hodge isometry $T_{X} \otimes {\Q} \simeq T_{A}(n)  \otimes {\Q}$.  
As $T_{A}\otimes {\Q}$ is embedded into $U^{3}\otimes {\Q}$,  
by Proposition \ref{generalized Morrison criterion}  
we can find a Kummer surface  ${\rm Km\/}B$ 
and a finite rational map $X \dashrightarrow  {\rm Km\/}B$.  
Since we have a Hodge isometry 
$T_{B}(m)\otimes {\Q} \simeq T_{A}\otimes {\Q}$ 
for some natural number $m$, 
the Abelian surface $B$ is isogenous  to the Abelian surface $A$.  
Thus there exists a dominant rational map 
${\rm Km\/}B \dashrightarrow  {\rm Km\/}A$. 
\end{proof}

\begin{corollary}\label{reflect rational map}
Let $X$ be an algebraic $K3$ surface and 
$A$ be an Abelian surface.  
If we have a dominant rational map $A \dashrightarrow X$,  
then there exists a dominant rational map $X \dashrightarrow {\rm Km\/}A$.  
\end{corollary}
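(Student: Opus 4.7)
The plan is to reduce the statement to Theorem \ref{rational map to Kummer} by producing a Hodge isometry $T_{X} \otimes \Q \simeq T_{A}(n) \otimes \Q$ for some natural number $n$. Given a dominant rational map $f : A \dashrightarrow X$ of generic degree $d$, first resolve its indeterminacy by a sequence of blow-ups $\pi : \tilde{A} \to A$, obtaining a surjective morphism $\tilde f : \tilde A \to X$. Since $\pi$ is a composition of point blow-ups on a surface, all exceptional classes are algebraic, so the transcendental lattice of $\tilde A$ coincides with $T_{A}$ as a sub-Hodge structure of $H^{2}$.

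Next I would examine the pullback $\tilde f^{\ast} : H^{2}(X, \Q) \to H^{2}(\tilde A, \Q)$, which is a morphism of rational Hodge structures. The projection formula $\tilde f_{\ast}\tilde f^{\ast} = d \cdot \mathrm{id}$ forces $\tilde f^{\ast}$ to be injective and to satisfy $(\tilde f^{\ast}\alpha, \tilde f^{\ast}\beta)_{\tilde A} = d(\alpha, \beta)_{X}$ for all $\alpha, \beta$. Restricting to transcendental parts yields an injective morphism of $\Q$-Hodge structures $\tilde f^{\ast} : T_{X} \otimes \Q \to T_{A} \otimes \Q$ that scales the quadratic form by $d$.

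The key step is to upgrade this injection to a bijection. Because $h^{2,0}(X) = h^{2,0}(A) = 1$ and $\tilde f^{\ast}$ is injective, $\tilde f^{\ast}$ sends $H^{2,0}(X)$ isomorphically onto $H^{2,0}(A)$. Hence the image $\tilde f^{\ast}(T_{X} \otimes \Q)$ is a rational sub-Hodge structure of $T_{A} \otimes \Q$ whose complexification contains $H^{2,0}(A)$. By the defining minimality of the transcendental lattice, the image must fill all of $T_{A} \otimes \Q$. Post-composing $\tilde f^{\ast}$ with the $\Q$-linear automorphism $v \mapsto v/d$ of $T_{A} \otimes \Q$ converts the form-scaling factor from $d$ to $1/d$; viewing the target as $T_{A}(d) \otimes \Q$ then yields an honest Hodge isometry $T_{X} \otimes \Q \simeq T_{A}(d) \otimes \Q$. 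Applying Theorem \ref{rational map to Kummer} with $n = d$ provides the desired dominant rational map $X \dashrightarrow \mathrm{Km}\, A$.

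The only nontrivial ingredient beyond Theorem \ref{rational map to Kummer} is the minimality argument that promotes the $\Q$-Hodge embedding $\tilde f^{\ast}$ into an isomorphism of $T \otimes \Q$'s; the rest is formal cohomological bookkeeping together with a rescaling to absorb the degree $d$ into the quadratic form. I do not expect any genuine obstacle.
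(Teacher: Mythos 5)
Your proposal is correct and follows essentially the same route as the paper, which treats the corollary as immediate from Theorem \ref{rational map to Kummer} together with the standard fact (already used in the proof of Proposition \ref{generalized Morrison criterion}, there via $f_{\ast}$) that a dominant rational map between surfaces induces a Hodge isometry of rational transcendental lattices up to scaling the form by the degree. You merely spell out that standard fact in detail (resolution of indeterminacy, projection formula, minimality of the transcendental Hodge structure, rescaling by $1/d$), and all of those steps are sound.
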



\begin{corollary}\label{isogeny via rat map}
Let $X$ and $Y$ be algebraic $K3$ surfaces dominated by some Kummer surfaces.  
Then there exists a dominant rational map $X \dashrightarrow Y$ 
if and only if 
there exists a dominant rational map $Y \dashrightarrow X$. 
\end{corollary}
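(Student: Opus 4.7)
Plan: the corollary reduces to a symmetry check for the condition appearing in Theorem \ref{main}, since we may apply that theorem in both directions once we know that both $X$ and $Y$ are dominated by Kummer surfaces.

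By symmetry of the hypotheses it suffices to prove one implication, so assume a dominant rational map $X \dashrightarrow Y$. Because $Y$ is dominated by a Kummer surface, the ``only if'' part of Theorem \ref{main} yields a Hodge isometry
\begin{equation*}
T_X \otimes \Q \; \simeq \; T_Y(n) \otimes \Q
\end{equation*}
for some natural number $n$. To invoke the ``if'' part of Theorem \ref{main} in the reverse direction (using the hypothesis that $X$ is dominated by a Kummer surface), what I need is a Hodge isometry of the shape $T_Y \otimes \Q \simeq T_X(m) \otimes \Q$ for some natural number $m$.

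The only subtlety is that the naive inverse of the displayed isometry gives instead $T_Y \otimes \Q \simeq T_X(1/n) \otimes \Q$, with a non-integral scale factor. This is harmless: scalar multiplication by $1/n$ on $T_X \otimes \Q$ is a Hodge isometry
\begin{equation*}
T_X(1/n) \otimes \Q \; \simeq \; T_X(n) \otimes \Q,
\end{equation*}
because multiplication by a rational scalar $c$ preserves the Hodge decomposition and scales the quadratic form by $c^2$ (here $c=1/n$ absorbs exactly the required ratio $n^2$ between the two scalings). Composing with this similitude produces a Hodge isometry $T_Y \otimes \Q \simeq T_X(n) \otimes \Q$, and Theorem \ref{main} then delivers the desired rational map $Y \dashrightarrow X$.

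There is no real obstacle: everything beyond Theorem \ref{main} is formal bookkeeping of form scalings. The only point to check with care is that scalar multiplication by a rational number indeed gives a Hodge isometry between the appropriately scaled rational Hodge structures, which is immediate from the fact that the Hodge decomposition is stable under rational, hence complex, scalars.
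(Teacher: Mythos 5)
Your proof is correct and is essentially the intended deduction: the paper leaves this corollary as an immediate consequence of Theorem \ref{main} applied in both directions, which is exactly what you do, with the inverse-scaling point (that $T_X(1/n)\otimes\Q$ and $T_X(n)\otimes\Q$ are Hodge isometric via multiplication by a rational scalar) handled correctly. No gaps.
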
 

Thus, as like Inose's paper \cite{In}, 
we are able to define a notion of isogeny for those $K3$ surfaces dominated by Kummer surfaces  
by the existence of rational map.

\end{document}